\newtheorem{theorem}{Theorem}[section]
\newtheorem{proposition}[theorem]{Proposition}
\newtheorem{corollary}[theorem]{Corollary}
\theoremstyle{definition}
\theoremstyle{remark}
\newtheorem{remark}[theorem]{Remark}
\numberwithin{equation}{section}
\begin{document}

\title{Notes on Conformal Perturbation of Heat Kernels}

%    Information for first author
%\author{*}
%    Address of record for the research reported here
%\address{}
%    Current address
%\curraddr{}
%\email{}
%    \thanks will become a 1st page footnote.
%\thanks{The first author was supported in part by NSF Grant \#000000.}

%    Information for second author

\author{Shiliang Zhao}
\address{Department of Mathematics, Sichuan University, Chengdu, Sichuan 610064, P.R.China}
\email{zhaoshiliang@scu.edu.cn}
%\thanks{The author is supported by the National Natural Science Foundation of China under Grant No.11901407}

%    General info
\subjclass[2020]{Primary 35K08}

%\date{January 1, 2001 and, in revised form, June 22, 2001.}

%\dedicatory{This paper is dedicated to our advisors.}

\keywords{Conformal Perturbation; Heat Kernel Estimates}

\begin{abstract}
Let $(M, g)$ be a smooth n-dimensional Riemannian manifold for $n\ge 3$. Consider the conformal perturbation $\tilde{g}=h g$ where $h$ is a smooth bounded positive function  on $M$. Denote by  $\widetilde{\Delta}$   the Laplace-Beltrami operator of manifold $(M, \tilde{g})$.  In this paper, we derive the upper bounds of the heat kernels for $(-\widetilde{\Delta})^\sigma$ with $0< \sigma \le 1$. Moreover, we also investigate the  gradient estimates of the heat kernel for $\widetilde{\Delta}$.
\end{abstract}

\maketitle

\section{Introduction}
Let $(M, g)$ be a smooth n-dimensional Riemannian manifold with metric $g$ for $n\ge 2$. Moreover set the volume element $d\mu$ and the Laplace-Beltrami operator $\Delta$. Denote by $d(x, y)$ the geodesic distance for $x, y \in M$ and $B(x, r)$ the geodesic ball centered at $x$ with radius $r>0$. In local coordinates the Laplace-Beltrami operator can be expressed as
$$\Delta= \frac{1}{\sqrt{G}} \partial_i (g^{ij} \sqrt{G}\partial_j), $$
where $G=\det (g_{ij})$, $(g^{ij})=g^{-1}$ and we have used the Einstein summation convention. Denote the heat kernel on $M$ by $p_t(x,y)$ which is the integral kernel of semigroup $e^{t\Delta}$  acting on $L^2(M, d\mu)$.

On complete manifolds with non-negative Ricci curvature, Li and Yau(\cite{L-Y86}) proved the upper and lower Gaussian bounds as well as the gradient estimates for the heat kernel as follows:
\begin{equation*}
\text{(LY)} \hspace{0.5cm}  \frac{C_1}{V(x, \sqrt{t})} \exp \left( -\frac{d^2(x, y)}{c_1 t} \right) \le p_t(x, y) \le  \frac{C_2}{V(x, \sqrt{t})} \exp \left( -\frac{d^2(x, y)}{c_2 t} \right),
\end{equation*}
and
\begin{equation*}
 \text{(G)} \hspace{1.5cm} |\nabla_x p_t (x, y)| \le \frac{C}{\sqrt{t} V(x, \sqrt{t})} \exp \left( -\frac{d^2(x, y)}{c t} \right),
\end{equation*}
for all $t>0, x, y \in M$. Since then, much effort has been made to establish (LY) and (G) on more general settings.  By \cite{FS86,HS01}, (LY) holds if and only if the manifolds satisfy the parabolic Harnack inequality. Moreover, it has been proved   by \cite{G-SC} that the parabolic Harnack inequality is stable under rough isometries and hence (LY) holds under rough isometries. However, when considering  such stability results for (G), the situation is more subtle. Recently, in \cite{D} Devyver studied the gradient estimates under  assumption on the negative part of the Ricci curvature. In \cite{CJKS}, Coulhon et al proved several  equivalent characterisations of (G). Among others, the $L^\infty$-reverse H\"older inequality for the gradients of harmonic functions is equivalent to (G) under some assumption on the underlying spaces. For more results, we refer the readers to \cite{BM18,Davies90,G91,G,HS01,J-L,Li06,Li12,SC92a,SC92} and references therein.

The  aim of this paper is to study  the upper bounds and gradient estimates of the  heat kernel under conformal perturbation. This problem has applications in both mathematics and physics. We refer the readers to \cite{A,B-R,D,G,Li06,Li18,J-L,M} and references therein.

Now consider the conformal perturbation of the metric. Let $\tilde{g}= h(x)g$ for $\forall x\in M$ where $h(x)$ is a positive smooth function on $M$. Denote by $d\widetilde{\mu}$ , $\widetilde{\Delta}$, $\widetilde{p}_t(x,y)$ the corresponding volume element, Laplace-Beltrami, heat kernel of $e^{t\widetilde{\Delta}}$ defined on $L^2(M, d\widetilde{\mu})$.  Note that  in local coordinates, we have
$$ \widetilde{\Delta}=  \frac{1}{h^{\frac{n}{2}}\sqrt{G}} \partial_i (h^{\frac{n}{2}-1} g^{ij} \sqrt{G}\partial_j). $$

Thus when $n=2$, it follows $\widetilde{\Delta}= h^{-1}\Delta$.  In \cite{M}, Morpurgo used the Dyson series to study heat kernel $\widetilde{p}_t(x,y)$ where the Dyson series are determined by $p_t(x,y)$ as well as $h(x)$  and hence one can get the upper bounds of $\widetilde{p}_t(x,y)$ from that of $p_t(x,y)$.  More generally, the methods works for the semigroup $e^{th^{-1}\Delta}$ defined on $L^2(M, h d\mu)$ of compact n-dimensional Riemannian manifold where $h$ is a positive smooth function. However, as said by the author, the conformal-geometric significance of the results is only in dimension 2.

As is shown,  $\widetilde{\Delta}= h^{-1}\Delta$ does not hold for $n\ge 3$. Indeed, the Laplace-Beltrami operator $\widetilde{\Delta}$ is related to the  weighted Laplace operator on $(M, g)$. Precisely,
$$ \widetilde{\Delta}= h^{-1}  \frac{1}{h^{\frac{n}{2}-1}\sqrt{G}} \partial_i (h^{\frac{n}{2}-1} g^{ij} \sqrt{G}\partial_j) \triangleq h^{-1} \bar{\Delta}, $$
where $\bar{\Delta}$ is the weighted Laplace operator on the weighted manifold $(M, g, d\bar{\mu})$ with $d\bar{\mu}= h^{\frac{n}{2}-1} d\mu$.  Denote the weighted heat kernel  by $\bar{p}_t(x,y)$ which is the integral kernel of $e^{t\bar{\Delta}}$ acting on $L^2(M, d\bar{\mu})$ where $d\bar{\mu}=h^{\frac{n}{2}-1} d\mu$. Thus according to the argument in \cite{M}, the Dyson series are determined by $\bar{p}_t(x,y)$ on the weighted manifold $(M, g, d\bar{\mu})$ instead of $p_t(x,y)$ on $(M, g, d\mu)$.

To proceed, we recall some facts about the weighted manifolds. Note that weighted manifolds have been extensively studied in recent years and found various  applications in many areas such as geometric analysis, Markov diffusion theory. See for example \cite{BGL, G, G-SC, Li05, Li18, WW09}.

Now we recall some facts about the weighted manifolds. First we call the manifold satisfies the doubling condition, if there exits constant $C>0$ such that
$$  V(x, 2r)\le C V(x, r), \hspace{0.5cm} \forall x\in M, r>0, $$
where $V(x, r)$ is the volume of ball $B(x, r)$ with respect to measure $d\mu$.

Furthermore, we need some notations from \cite{G, G-SC}. For a manifold $(M, g)$, fix a point $o\in M$ and set
$$  |x| \triangleq d(x, o), \hspace{0.5cm} V(s) \triangleq V(o, s). $$
The manifold is said to have relatively connected annuli (RCA) if there exists a constant $K>0$ such that for all $x, y \in M$ and large enough $r$ with $|x|=|y|=r$, there exists a continuous path $\gamma: [0, 1] \rightarrow M$ with $\gamma(0)=x,   \gamma(1)=y$ whose image is contained in $B(o, Kr)\backslash B(o, K^{-1}r)$.
Then the following results hold.
\begin{theorem}
Let $M$ be a complete n-dimensional Riemannian manifold for $n\ge 3$ with non-negative Ricci curvature satisfying (RCA) with respect to $o\in M$. Let $h$ be a positive bounded smooth function on $M$.
Then  there exist constants $\alpha, C_1>0, C_2>1$ such that for  $\|\phi \|_\infty < C^{-1}_2 $ and $\forall t>0, x, y\in M$,
$$  \widetilde{p}_t(x,y) \le \frac{C_1}{1-C_2 \|\phi \|_\infty} \frac{1}{V(x, \sqrt{\alpha t}) } \exp \left(-\frac{d^2(x, y)}{\alpha t}\right),  $$
where $\phi(x)=1-h(x)$.
\end{theorem}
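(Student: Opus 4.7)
The plan is to exploit the factorization $\widetilde\Delta = h^{-1}\bar\Delta$ highlighted in the introduction and to adapt Morpurgo's Dyson-series method from \cite{M}, with the weighted heat kernel $\bar p_t$ on $(M,g,d\bar\mu)$ playing the role of $p_t$. As a preliminary step I would establish Li--Yau-type Gaussian upper bounds for $\bar p_t$. Since $h$ is smooth, strictly positive, and bounded, there exist constants $0<c\le h\le C$ on $M$, so $d\bar\mu$ and $d\mu$ are globally comparable and $\bar V(x,r)\asymp V(x,r)$. The Bishop--Gromov theorem under $\mathrm{Ric}\ge 0$ gives volume doubling for $V$, which transfers to $\bar V$, while Buser's $L^2$-Poincar\'e inequality on $(M,g)$ transfers to $(M,g,d\bar\mu)$ because the gradient is unchanged. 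By the Grigoryan--Saloff-Coste equivalence, doubling plus Poincar\'e yields the parabolic Harnack inequality for $\bar\Delta$, and hence
$$\bar p_t(x,y)\le \frac{C'}{\bar V(x,\sqrt t)}\exp\!\left(-\frac{d^2(x,y)}{c't}\right).$$

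Next I would expand $\widetilde p_t$ in a Dyson series in the perturbation $\phi=1-h$. Using $\widetilde\Delta=\bar\Delta+(h^{-1}-1)\bar\Delta$ and iterating Duhamel's formula produces a formal expansion
$$\widetilde p_t(x,y)=\sum_{k=0}^{\infty}P^{(k)}_t(x,y),$$
where $P^{(0)}_t=\bar p_t$ and each $P^{(k)}_t$ is a $k$-fold time-ordered space-time convolution of copies of $\bar p_s$ threaded together by factors built from $\phi$. The core estimate is
$$\bigl|P^{(k)}_t(x,y)\bigr|\le \bigl(C_2\|\phi\|_\infty\bigr)^k\,\frac{C_1}{\bar V(x,\sqrt{\alpha t})}\exp\!\left(-\frac{d^2(x,y)}{\alpha t}\right),$$
with geometric constants $C_1,C_2,\alpha$ independent of $k$. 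This should follow from the Li--Yau bound for $\bar p_t$ together with the standard weighted Gaussian convolution inequality
$$\int_M\frac{\exp\!\left(-d^2(x,z)/(\beta(t-s))\right)\exp\!\left(-d^2(z,y)/(\beta s)\right)}{\bar V(z,\sqrt{t-s})\,\bar V(z,\sqrt s)}\,d\bar\mu(z)\;\lesssim\;\frac{1}{\bar V(x,\sqrt t)}\exp\!\left(-\frac{d^2(x,y)}{\beta t}\right),$$
which is a consequence of doubling and the triangle inequality. Summing the geometric series gives the prefactor $1/(1-C_2\|\phi\|_\infty)$ whenever $\|\phi\|_\infty<C_2^{-1}$, and replacing $\bar V$ by $V$ (up to the constants $c,C$) produces the stated bound.

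The principal obstacle is the per-term bound on $P^{(k)}_t$: each iterated Duhamel integrand formally contains a factor $\bar\Delta\bar p_s$, which has a $1/s$ singularity at the origin, so a naive termwise estimate diverges. The remedy is to transfer the $\bar\Delta$ onto the smooth factor $\phi$ by integration by parts (using self-adjointness of $\bar\Delta$ on $L^2(M,d\bar\mu)$), or equivalently to rewrite $\bar\Delta\bar p_s=\partial_s\bar p_s$ and integrate by parts in $s$, so that only $\bar p_s$ and bounded derivatives of $\phi$ remain in the integrand. Tracking the combinatorial $1/k!$ coming from the time-ordered simplex is what produces geometric convergence of the series, in the same spirit as Morpurgo's two-dimensional argument but now transplanted to the weighted manifold where Li--Yau is available for $\bar p_t$.
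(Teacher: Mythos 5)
Your overall strategy---passing to the weighted manifold $(M,g,d\bar\mu)$, getting Li--Yau bounds for $\bar p_t$ from doubling plus Poincar\'e (both inherited since $h\simeq 1$), expanding $\widetilde p_t$ in a Duhamel/Dyson series and summing a geometric series---is exactly the paper's skeleton. But you have correctly located the crux (the factor $\bar\Delta\bar p_s\sim s^{-1}\bar p_s$ in each iterated Duhamel term) and then proposed remedies that do not work. Your first fix, integrating by parts to throw $\bar\Delta$ onto $\phi$, is incompatible with the theorem as stated: the hypothesis and the conclusion involve only $\|\phi\|_\infty$, and the prefactor $1/(1-C_2\|\phi\|_\infty)$ must be produced by powers of $\|\phi\|_\infty$ alone, so no derivatives of $\phi$ (equivalently of $h$) may enter the per-term bounds. (Derivative assumptions on $h$ appear only later, in condition (A) for the gradient estimate of Theorem 1.4.) Your second fix, ``rewrite $\bar\Delta\bar p_s=\partial_s\bar p_s$ and integrate by parts in $s$,'' merely moves the time derivative onto the other heat-kernel factor and leaves the same $s^{-1}$ singularity; a single integration by parts does not close the argument, and you give no mechanism for controlling the accumulated time derivatives across all $k$ factors without factorial loss.

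The missing idea, which is Morpurgo's and which the paper implements, is to organize the $k$-th term of the series as $\partial_t^k\beta_t^k(x,y)$, where
$$\beta_z^k(x,y)=\int_0^1\int_M z\,\bar p_{(1-v)z}(x,w)\,\beta_{vz}^{k-1}(w,y)\,\phi(w)\,d\bar\mu(w)\,dv$$
is a plain $k$-fold space-time convolution containing only $\phi$ and undifferentiated heat kernels; one checks directly that $\sum_k\partial_t^k\beta_t^k$ solves $\partial_t u=\widetilde\Delta u$ with the right initial data. The simplex gives $|\beta_z^k|\le C^k\|\phi\|_\infty^k\,\frac{|z|^k}{k!}\,\bar p_{\alpha|z|}(x,y)$, but now one must take $k$ time derivatives, and the only clean way to do this with constants growing geometrically rather than factorially is Cauchy's integral formula on a circle of radius $t\sin\frac{\pi}{4}$ about $t$: the $k!/(t\sin\frac{\pi}{4})^k$ from Cauchy exactly cancels the $|z|^k/k!$ from the simplex, leaving $(\sqrt{2}C\|\phi\|_\infty)^k\,\bar p_{\alpha' t}(x,y)$. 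This in turn forces you to prove complex-time Gaussian upper bounds $|\bar p_z(x,y)|\le C\bar p_{c|z|}(x,y)$ for $|\arg z|\le\theta_0$ (the paper's Proposition 2.1, obtained from the Coulhon--Sikora Phragm\'en--Lindel\"of estimates together with doubling), an ingredient entirely absent from your proposal. Without the $\partial_t^k\beta_t^k$ reorganization, the complex-time bounds, and the Cauchy-formula cancellation, the per-term estimate $|P_t^{(k)}|\le(C_2\|\phi\|_\infty)^k\cdot(\text{Gaussian})$ that your geometric series requires is not established.
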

Now we give some remarks about the assumptions on $\phi$.
\begin{remark}
1. Compared to the previous results \cite{G, J-L, M, SC92}, our contributions are twofold. First we generalize the results in \cite{M} to noncompact manifolds and hence provide an alternative approach to study the heat kernel under conformal perturbation. Second, the upper bounds here are expressed in terms of geodesic and volumes with respect to the origin metric $g$.

2. Note  that the results can be equivalently stated as
$$  \widetilde{p}_t(x,y) \le \frac{C_1}{1-C_2 \|\phi \|_\infty} \bar{p}_{\alpha t} (x, y). $$
The assumption $\|\phi \|_\infty <C_2^{-1}$ implies that $h$ is bounded. Next example shows that when $h$ is unbounded, the results in Theorem 1.1 may not hold. Let $M$ be the two dimension Euclidean spaces $\mathbb{R}^2$  with $g_{ij}= \delta_{ij}$ and $h(x)=(1+|x|^2)^{\frac{1}{2}}$. According to  \cite[Theorem 1.2]{P}, the on diagonal estimates of $\widetilde{p}$ satisfies
$$  c_1 t^{-1} (1+|x|)^{4}\le \widetilde{p}_t (x, x), \hspace{0.5cm} \forall~ 0<t \le c_2 (1+|x|^2)^{\frac{1}{2}}. $$
Meanwhile, when $n=2$, $\bar{p}_t(x, y)= c t^{-1} e^{-\frac{|x-y|^2}{4t}}$ is the classical heat kernel of $\mathbb{R}^2$.
\end{remark}
When $ V(x, r) \simeq r^n $ for $\forall x\in M, r>0$, by the Bochner¡¯s subordination principle, we obtain the following result.
\begin{corollary}
Let $0<\sigma<1$. Denote by $\widetilde{p}_t^\sigma(x,y)$ the heat kernel for $(-\widetilde{\Delta})^\sigma$. Assume that $ V(x, r) \simeq r^n $ for $\forall x\in M, r>0$. Then under the assumption of Theorem 1.1, there exists $C>0$ such that
$$ \widetilde{p}_t^\sigma(x,y)\le C t^{-\frac{n}{2\delta}} \wedge \frac{t}{d(x, y)^{n+2\delta}}, \hspace{0.5cm} \forall x, y\in M, t>0. $$
\end{corollary}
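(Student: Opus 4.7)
The plan is to apply Bochner's subordination principle. For $0<\sigma<1$, the functional calculus gives the pointwise representation
\begin{equation*}
\widetilde{p}_t^\sigma(x,y)=\int_0^\infty \widetilde{p}_s(x,y)\,\eta_t^\sigma(s)\,ds,
\end{equation*}
where $\eta_t^\sigma$ denotes the density of the one-sided $\sigma$-stable subordinator at time $t$. I will use three standard properties of $\eta_t^\sigma$: the self-similarity $\eta_t^\sigma(s)=t^{-1/\sigma}\eta_1^\sigma(s\,t^{-1/\sigma})$; the uniform tail bound $\eta_t^\sigma(s)\le C\,t\,s^{-1-\sigma}$ on $(0,\infty)$; and the fact that $\eta_1^\sigma$ decays like a stretched exponential as $s\to 0^+$ and like $s^{-1-\sigma}$ as $s\to\infty$, which ensures $\int_0^\infty u^{-n/2}\eta_1^\sigma(u)\,du<\infty$ for every $n$.

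Next, I would combine Theorem~1.1 with the Ahlfors-regular hypothesis $V(x,r)\simeq r^n$ to obtain the Gaussian bound
\begin{equation*}
\widetilde{p}_s(x,y)\le C\,s^{-n/2}\exp\!\left(-\frac{d^2(x,y)}{\alpha s}\right),\qquad \forall\,s>0,\ x,y\in M,
\end{equation*}
and then derive the two target estimates separately. For the on-diagonal one, I drop the exponential factor and invoke the scaling of $\eta_t^\sigma$ together with the change of variable $u=s\,t^{-1/\sigma}$ to get
\begin{equation*}
\widetilde{p}_t^\sigma(x,y)\le C\int_0^\infty s^{-n/2}\,\eta_t^\sigma(s)\,ds = C\,t^{-n/(2\sigma)}\int_0^\infty u^{-n/2}\eta_1^\sigma(u)\,du\le C'\,t^{-n/(2\sigma)}.
\end{equation*}

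For the off-diagonal estimate I would apply the tail bound $\eta_t^\sigma(s)\le C\,t\,s^{-1-\sigma}$, which yields
\begin{equation*}
\widetilde{p}_t^\sigma(x,y)\le C\,t\int_0^\infty s^{-n/2-1-\sigma}\exp\!\left(-\frac{d^2(x,y)}{\alpha s}\right)ds.
\end{equation*}
The substitution $u=d^2(x,y)/(\alpha s)$ reduces the right-hand side to $\Gamma(n/2+\sigma)$ times an explicit power of $d(x,y)$, producing the bound $C\,t/d(x,y)^{n+2\sigma}$. Taking the minimum of the two estimates yields the claim (identifying the $\delta$ in the statement with $\sigma$). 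The only point requiring care is not analytic but conceptual: one must verify that the subordination identity descends to the kernel level pointwise rather than merely weakly on $L^2(M,d\widetilde{\mu})$, which is justified by Fubini and dominated convergence once the uniform Gaussian bound on $\widetilde{p}_s$ is in hand.
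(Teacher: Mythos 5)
Your proof is correct and follows essentially the same route as the paper: both reduce the claim to the Gaussian upper bound $\widetilde{p}_s(x,y)\le C s^{-n/2}\exp\left(-d^2(x,y)/(\alpha s)\right)$, obtained from Theorem 1.1 together with $V(x,r)\simeq r^n$, and then apply Bochner subordination (with $\delta=\sigma$, as you note). The only difference is that you carry out the subordinator-density computation explicitly, whereas the paper delegates exactly this step to \cite[Theorem 2.5]{GHL}.
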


To get the gradient estimates, we need some further assumptions. Let $H(x)=h^{\frac{n-2}{4}}(x), W(x)=\frac{\Delta H(x)}{H(x)}$. Set
$$ \text{(A)} \hspace{1cm} \left|\frac{\nabla H(x)}{H(x)}\right|, |W(x)|, |\nabla W(x)|, |\Delta W(x)| \le C, \hspace{0.5cm} \forall x\in M. $$
\begin{theorem}
Let $M$ be a complete n-dimensional Riemannian manifold for $n\ge 3$ with non-negative Ricci curvature satisfying (RCA) with respect to $o\in M$. Let $h$ be a positive bounded smooth function on $M$. Suppose that $H(x)$ satisfies (A). Then  there exist constants $\gamma, C_1>0, C_2>1$ such that for  $\|\phi \|_\infty < C^{-1}_2 $ and $\forall t>0,  x, y\in M$,
$$ | \widetilde{\nabla}_x \widetilde{p}_t(x,y)| \le \frac{C_1}{1-C_2 \|\phi \|_\infty}\frac{1}{1\wedge \sqrt{t}} \frac{1}{V(x, \sqrt{\gamma t}) } \exp \left(-\frac{d^2(x, y)}{\gamma t}\right),  $$
where $\phi(x)=1-h(x)$.
\end{theorem}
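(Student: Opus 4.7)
The first step is to convert the intrinsic gradient $|\widetilde{\nabla}_x \widetilde{p}_t(x,y)|_{\tilde g}$ into the ambient one. Since $\tilde g = hg$, one has $|\widetilde{\nabla} f|^2_{\tilde g} = h^{-1}|\nabla f|^2_g$, and $h$ is bounded above and away from $0$ because $\|\phi\|_\infty < C_2^{-1} < 1$, so the two gradient norms are equivalent up to a multiplicative constant. The Dyson-series expansion used in the proof of Theorem~1.1 expresses $\widetilde{p}_t$ as an absolutely convergent sum whose individual terms are space-time convolutions of the weighted kernel $\bar{p}_t$ on $(M, g, d\bar{\mu})$ with factors of $\phi = 1 - h$. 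Differentiating this series termwise in $x$ and using that $|\nabla h|$ is bounded (from (A), since $\nabla h = \tfrac{4}{n-2}\,h\,\nabla H/H$), the task reduces to establishing the analogous gradient bound for the weighted heat kernel:
$$|\nabla_x \bar{p}_t(x,y)| \le \frac{C}{(1\wedge\sqrt{t})\, V(x,\sqrt{\gamma t})}\exp\!\left(-\frac{d^2(x,y)}{\gamma t}\right).$$

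To attack this I would use the ground-state transform. With $H = h^{(n-2)/4}$ and $W = \Delta H / H$, a direct computation gives the intertwining
$$\bar{\Delta} = H^{-1}(\Delta - W)H \quad\text{as operators on functions},$$
and consequently $\bar{p}_t(x,y) = p^W_t(x,y)/[H(x)H(y)]$, where $p^W_t$ is the heat kernel of the Schr\"odinger operator $\Delta - W$ on $(M, g, d\mu)$. Differentiating in $x$,
$$\nabla_x \bar{p}_t(x,y) = \frac{\nabla_x p^W_t(x,y)}{H(x)H(y)} - \frac{\nabla H(x)}{H(x)}\,\bar{p}_t(x,y).$$
By (A) and the Gaussian bound on $\bar{p}_t$ from Theorem~1.1, the second term is already dominated by the desired Gaussian--volume kernel (with the $1\wedge\sqrt{t}$ factor replaced by $1$, which is admissible). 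Since $H$ is bounded above and away from $0$, the remaining problem is a gradient estimate for $p^W_t$ of the form $\frac{C}{(1\wedge\sqrt{t})\,V(x,\sqrt{\gamma t})}e^{-d^2/\gamma t}$.

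For this I would use the Li--Yau gradient estimate (G) for $p_t$ on $(M, g)$, valid under non-negative Ricci curvature, together with Duhamel's formula
$$p^W_t = p_t - \int_0^t e^{(t-s)\Delta}\bigl[W\,p^W_s(\cdot, y)\bigr]\,ds.$$
Differentiating in $x$, iterating, and combining (G) with the Gaussian upper bound for $p^W_s$ (which follows from the bound on $\bar{p}_s$ and boundedness of $H$) and the standard convolution lemma for Gaussian--volume kernels on doubling spaces yields, for $t\in(0,1]$, a small-time bound of the form $\frac{C(1+\|W\|_\infty t)}{\sqrt{t}\,V(x,\sqrt{t})}e^{-d^2/ct}$, which is the target in the small-time regime. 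For $t>1$ I would decompose via Chapman--Kolmogorov, $p^W_t(x,y) = \int p^W_1(x,z)\, p^W_{t-1}(z,y)\,d\mu(z)$, differentiate in $x$, and combine the small-time gradient bound at time $1$ with the large-time Gaussian upper bound on $p^W_{t-1}$, using $e^{-d^2(x,z)/c}e^{-d^2(z,y)/c(t-1)} \le e^{-d^2(x,y)/ct}$, to get $\frac{C}{V(x,\sqrt{t})}e^{-d^2/\gamma t}$. The main obstacle, and precisely where the full strength of (A) (the bounds on $|\nabla W|$ and $|\Delta W|$) enters, is to justify the differentiated Duhamel iteration: interchanging $\nabla_x$ with the semigroup integrals and controlling higher-order iterates without blow-up of the constants requires parabolic regularity of $p_t$ and $p^W_t$ up to second order. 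Once this is secured, termwise differentiation of the Dyson series for $\widetilde{p}_t$ converges absolutely under $\|\phi\|_\infty < C_2^{-1}$ by the same geometric-series argument as in Theorem~1.1, producing the stated bound.
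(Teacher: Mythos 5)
Your proposal is essentially correct, and its global architecture coincides with the paper's: differentiate the Dyson series termwise, reduce to a gradient bound for the weighted kernel $\bar{p}_t$, and pass to the Schr\"odinger kernel $p^W_t$ via the Doob/ground-state transform $\bar{p}_t(x,y)=p^W_t(x,y)/[H(x)H(y)]$, exactly as in the paper's Proposition~2.2. Where you genuinely diverge is in how the gradient bound $|\nabla_x p^W_t|\le C(1\wedge\sqrt{t})^{-1}V(x,\sqrt{\gamma t})^{-1}e^{-d^2/\gamma t}$ is obtained. The paper gets it in one stroke from Li--Yau's Theorem~3.1 for Schr\"odinger operators, which yields a bound of the shape $|\nabla_x p^W_t|\lesssim\sqrt{|\partial_t p^W_t|\,p^W_t}$ together with the time-derivative estimate for $\bar{p}_t$ from Grigor'yan; this is precisely where the hypotheses $|W|,|\nabla W|,|\Delta W|\le C$ of (A) are consumed. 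You instead run a Duhamel perturbation off the unweighted kernel using (G) and the Gaussian convolution lemma on doubling spaces, plus a Chapman--Kolmogorov splitting for $t>1$. This route is more elementary and self-contained, and it actually needs less of (A) for this step: since the Gaussian upper bound on $p^W_s$ is already available (from the Doob transform, $H\simeq 1$, and \eqref{weighted heat kernel real time}), a single application of Duhamel suffices and no iteration is required, so the "main obstacle" you identify largely evaporates, and only $\|W\|_\infty$ is used --- your claim that the bounds on $|\nabla W|$ and $|\Delta W|$ are what justify the differentiated iteration is not where those hypotheses are really needed (they are needed if one follows the paper's Li--Yau route). Two small points to tighten: the interchange of $\nabla_x$ with $\int_0^t\int_M$ is harmless since the singularity $(t-s)^{-1/2}$ from (G) is integrable; and your reduction of $\widetilde{\nabla}$ to $\nabla$ via $|\widetilde{\nabla}f|^2_{\tilde g}=h^{-1}|\nabla f|^2_g$ with $h\simeq 1$ is correct and in fact cleaner than the paper's closing line.
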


In this paper, we use the following notations.
For positive functions $f(x)$ and $g(x)$ defined on  $M$, we say $f\simeq g$ if there exists a constant $C>0$ such that
$ C^{-1}\le \frac{f(x)}{g(x)} \le C , \forall x\in M.  $
Set $f\wedge g (x) = \min \{ f(x), g(x) \}$ for $\forall x\in M$ where $f(x), g(x)$ are two functions defined on $M$.
The constants $c, C>0$ may change from line to line, unless otherwise stated.

\section{Preliminaries}
In this section,  we collect several estimates of the heat kernel $\bar{p}_t(x, y)$ on weighted manifolds which will be used in the sequel.

Note first that under the assumption of Theorem 1.1, 1.4, we always have $0<1-C_2^{-1}<h(x)<1+C_2^{-1}$ for $\forall x\in M$. Thus by \cite[p.861-863]{G-SC} we have $ \bar{\mu}(B(x, r)) \simeq V(x, r)$ and hence $ (M, d, d\bar{\mu}) $ satisfies the doubling property.
Moreover, by \cite{G,G-SC}, the following Li-Yau type estimates hold:
 \begin{equation}\label{weighted heat kernel real time}
    \frac{C_1}{\overline{V}(x, \sqrt{t})} \exp \left( -\frac{d^2(x, y)}{c_1 t} \right) \le \bar{p}_t(x, y) \le  \frac{C_2}{\overline{V}(x, \sqrt{t})} \exp \left( -\frac{d^2(x, y)}{c_2 t} \right)
 \end{equation}
where $\overline{V}(x, r)=\bar{\mu}(B(x, r))$.
Moreover, by doubling property, we have for $0<\alpha<\beta$
\begin{equation}\label{comparsion}
  \bar{p}_{\alpha t}(x, y)\le C \bar{p}_{\beta t}(x, y), \hspace{1cm} \forall t>0, x, y\in M,
\end{equation}
where $C$ is determined by $\alpha, \beta$.
Moreover, we have the following results.
\begin{proposition}
Let $0\le \theta_0 < \frac{\pi}{2} $. Under the assumption of Theorem 1.1, there exist $C, c>0$ such that
\begin{equation}\label{complex time estimate}
  |\bar{p}_z(x, y)|\le C \bar{p}_{c|z|} (x, y), \hspace{0.5cm} \forall ~ x, y \in M, |\arg z|\le \theta_0,
\end{equation}
where $C, c$ are determined by $\theta_0$.
\end{proposition}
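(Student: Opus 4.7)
The plan is to bootstrap the real-time Gaussian upper bound (2.1) to complex $z$ by combining spectral theory with Davies's exponential perturbation trick. Since $\bar\Delta$ is self-adjoint on $L^2(M,d\bar\mu)$, the semigroup $e^{z\bar\Delta}$ extends holomorphically to the open right half-plane $\mathrm{Re}\,z>0$, and $\bar p_z(x,y)$ depends analytically on $z$ there. The strict inequality $\theta_0<\pi/2$ gives $\mathrm{Re}\,z\ge|z|\cos\theta_0$, and by doubling $\overline V(x,\sqrt{\mathrm{Re}\,z})\simeq \overline V(x,\sqrt{|z|})$, so it suffices to control everything in terms of $\mathrm{Re}\,z$.

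First I would establish an on-diagonal bound at complex time. By the spectral theorem, for each fixed $x$ there is a positive Borel measure $\nu_x$ on $[0,\infty)$ such that $\bar p_z(x,x)=\int_0^\infty e^{-\lambda z}\,d\nu_x(\lambda)$, whence
\[
|\bar p_z(x,x)|\le \int_0^\infty e^{-\lambda\mathrm{Re}\,z}\,d\nu_x(\lambda)=\bar p_{\mathrm{Re}\,z}(x,x)\le \frac{C}{\overline V(x,\sqrt{|z|})}
\]
using (2.1). By a standard $TT^*$/duality argument, this on-diagonal bound is equivalent to an $L^2\to L^\infty$ operator-norm bound for $e^{(z/2)\bar\Delta}$ (and, by symmetry, an $L^1\to L^2$ bound for the other half), which is what is needed for the second step.

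Next, to inject Gaussian off-diagonal decay, I would invoke Davies's perturbation method. For a Lipschitz function $\rho$ with $|\nabla\rho|\le 1$ and a real parameter $\alpha>0$, consider the conjugated semigroup $P_z^{\alpha,\rho}f:=e^{\alpha\rho}e^{z\bar\Delta}(e^{-\alpha\rho}f)$. Its generator is a perturbation of $\bar\Delta$ whose first- and zero-order terms are $O(\alpha)$, so the usual form estimate gives $\|P_z^{\alpha,\rho}\|_{L^2\to L^2}\le e^{c_0\alpha^2|z|}$ uniformly for $|\arg z|\le\theta_0$ with $c_0=c_0(\theta_0)$. Composing $P_z^{\alpha,\rho}$ with the $L^2\to L^\infty$ bound from Step~1 and specialising $\rho(\cdot)=d(\cdot,y)$ yields
\[
|\bar p_z(x,y)|\le \frac{C}{\overline V(x,\sqrt{|z|})}\exp\!\bigl(c_0\alpha^2|z|-\alpha\,d(x,y)\bigr),
\]
after which optimising in $\alpha$ produces the Gaussian factor $\exp(-d(x,y)^2/(c|z|))$. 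The lower estimate in (2.1), applied at time $c|z|$, then lets me absorb the prefactor and rewrite the right-hand side as $C\,\bar p_{c|z|}(x,y)$, using (2.2) if a further time-rescaling is needed.

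The main obstacle is making Davies's commutator/energy estimate go through cleanly when the time parameter is complex: the generator of $P_z^{\alpha,\rho}$ is no longer self-adjoint, and one has to work directly with the sesquilinear form associated to $e^{z\bar\Delta}$, separating real and imaginary parts and using $\mathrm{Re}\,z\simeq |z|$ to guarantee coercivity. This is precisely where the assumption $\theta_0<\pi/2$ is indispensable: without it, the quadratic gain $c_0\alpha^2|z|$ would not control the boundary contributions from the imaginary part, and the optimisation in $\alpha$ would fail to reproduce the correct Gaussian exponent.
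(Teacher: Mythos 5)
Your overall strategy (spectral theorem for a crude complex-time on-diagonal bound, then Davies's exponential perturbation to recover Gaussian decay, then the real-time lower bound in \eqref{weighted heat kernel real time} to rewrite the result as $C\bar p_{c|z|}$) is a genuinely different route from the paper, which simply invokes the complex-time Gaussian upper bound of Coulhon--Sikora \cite[Corollary 4.4]{C-S} (proved there by Phragm\'en--Lindel\"of interpolation in the sector between the real-time Gaussian bound and analyticity), combines it with $\overline{V}(x,r)\simeq V(x,r)$, doubling, and \eqref{comparsion}. Your Steps 1 and the twisted $L^2\to L^2$ estimate $\|P^{\alpha,\rho}_z\|_{2\to2}\le e^{c_0\alpha^2|z|}$ are both correct, and your identification of where $\theta_0<\pi/2$ enters is accurate.

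However, there is a genuine gap at the composition step. The quantity you ultimately need to bound is the kernel of the \emph{twisted} operator, $e^{\alpha\rho(x)}\bar p_z(x,y)e^{-\alpha\rho(y)}$, i.e.\ you need an $L^1\to L^\infty$ bound for $P^{\alpha,\rho}_z$ itself. Composing the twisted $L^2\to L^2$ bound with the \emph{untwisted} $L^2\to L^\infty$ bound from Step 1 does not produce this: writing $e^{z\bar\Delta}=e^{(z/2)\bar\Delta}\circ e^{(z/2)\bar\Delta}$ and estimating one factor without the weight loses the factor $e^{-\alpha d(x,y)}$ entirely, while estimating $\|P^{\alpha,\rho}_{z/2}\|_{2\to\infty}^2=\sup_x e^{2\alpha\rho(x)}\int|\bar p_{z/2}(x,w)|^2e^{-2\alpha\rho(w)}\,d\bar\mu(w)$ from the on-diagonal bound alone fails, because for $\rho=d(\cdot,y)$ and $x$ far from $y$ the prefactor $e^{2\alpha\rho(x)}$ is not compensated near $w\approx y$ without Gaussian decay already in hand --- which is circular. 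The standard repairs are either (i) to prove twisted ultracontractivity directly by rerunning a Nash/log-Sobolev iteration for the perturbed generator, or (ii) to convert the twisted $L^2\to L^2$ bound into Davies--Gaffney off-diagonal estimates $\|1_E e^{z\bar\Delta}1_F\|_{2\to2}\le e^{-d(E,F)^2/(c|z|)}$ and then invoke the known (but nontrivial) lemma that on-diagonal bounds plus Davies--Gaffney plus doubling imply pointwise Gaussian bounds. Either way this is a substantive missing ingredient, not a routine verification; and given that the real-time Gaussian bound \eqref{weighted heat kernel real time} is already available, the Phragm\'en--Lindel\"of route used in \cite{C-S} is considerably shorter.
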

\begin{proof}
According to \cite[Corollary 4.4]{C-S}, there exist constants $C, \delta>0$ such that
$$  |\bar{p}_z(x, y)|\le \frac{C \left( 1+ \Re \frac{d^2(x, y)}{z}\right)^\delta }{\sqrt{ \overline{V}(x,\sqrt{\frac{|z|}{\cos \theta}}) \overline{V}(y,\sqrt{\frac{|z|}{\cos \theta}})} } \exp \left(-c~ \Re \frac{d^2(x, y)}{z}\right) \frac{1}{(\cos \theta)^\delta}  $$
for all $x, y \in M, \Re z>0$ where $\theta=\arg z$. Then by the doubling property of $(M, d, d\bar{\mu})$ and the fact $\bar{\mu}(B(x, r)) \simeq V(x, r) $, we have
$$|\bar{p}_z (x, y)| \le \frac{C}{V\left(x,\sqrt{\frac{|z|}{\cos \theta}}\right)  } \exp \left(-c~ \Re \frac{d^2(x, y)}{z}\right) \frac{1}{(\cos \theta)^\delta}. $$
Note that, $\Re \frac{d^2}{z}= \frac{d^2 \cos \theta}{|z|}$ where $\theta= \arg z$. Finally by the doubling property of $(M, d, d\bar{\mu})$ and \eqref{comparsion}, the result holds.
\end{proof}
\begin{proposition}
Under the assumption of Theorem 1.4, there exist constants $C, c>0$ such that
\begin{equation*}%\label{weighted heat kernel complex time}
  |\nabla_x \bar{p}_t (x, y)| \le  \frac{C}{1\wedge \sqrt{t}} \bar{p}_{ct} (x, y) \hspace{1cm} \forall t>0, x, y\in M.
\end{equation*}
\end{proposition}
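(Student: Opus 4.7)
The plan is to reduce the estimate to a gradient bound for a Schrödinger heat kernel on the unweighted manifold $(M,g)$. A direct computation based on $\bar\Delta=\Delta+2\nabla\log H\cdot\nabla$ gives the intertwining identity
\[
H\,\bar\Delta(f/H)=\Delta f-Wf,\qquad W=\frac{\Delta H}{H},
\]
so the map $Uf=Hf$ is a unitary isomorphism $L^2(M,d\bar\mu)\to L^2(M,d\mu)$ conjugating $\bar\Delta$ to the Schrödinger operator $\Delta-W$. On the level of heat kernels this translates into
\[
p^W_t(x,y)=H(x)H(y)\,\bar p_t(x,y),
\]
where $p^W_t$ is the heat kernel of $e^{t(\Delta-W)}$ on $(M,g)$. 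Since $\|\phi\|_\infty<C_2^{-1}$ forces $H$ to be pinched between two positive constants, and $|\nabla H/H|$ is bounded by (A), differentiating the identity in $x$ yields
\[
|\nabla_x\bar p_t(x,y)|\le C\,\bar p_t(x,y)+C\,|\nabla_x p^W_t(x,y)|,
\]
so it suffices to prove the analogous gradient estimate for $p^W_t$.

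For small times $0<t\le 1$, I would use the Duhamel formula
\[
p^W_t(x,y)=p_t(x,y)-\int_0^t\!\int_M p_{t-s}(x,z)\,W(z)\,p^W_s(z,y)\,d\mu(z)\,ds,
\]
differentiate under the integral sign, and plug in: (i) the Li--Yau gradient bound (G) on $(M,g)$, which holds by non-negative Ricci curvature and gives $|\nabla_x p_\tau(x,z)|\le C\tau^{-1/2}p_{c\tau}(x,z)$ after a standard Gaussian comparison; (ii) the equivalence $p^W_s\simeq\bar p_s\simeq p_s$, which follows from $H$ being pinched and from \eqref{weighted heat kernel real time}; (iii) the semigroup identity $\int_M p_{c(t-s)}(x,z)p_{cs}(z,y)\,d\mu(z)=p_{ct}(x,y)$; and (iv) the boundedness of $W$ from (A). Together these ingredients give
\[
|\nabla_x p^W_t(x,y)|\le \frac{C}{\sqrt t}\,p_{ct}(x,y)+C\sqrt t\,p_{ct}(x,y)\le \frac{C'}{\sqrt t}\,p_{ct}(x,y),\qquad 0<t\le 1.
\]

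For $t\ge 1$, I would exploit the semigroup property of $p^W$ itself, writing
\[
\nabla_x p^W_t(x,y)=\int_M \nabla_x p^W_1(x,z)\,p^W_{t-1}(z,y)\,d\mu(z),
\]
and applying the $t=1$ bound $|\nabla_x p^W_1(x,z)|\le C\,p^W_c(x,z)$ just obtained. Chapman--Kolmogorov then gives $\int_M p^W_c(x,z)\,p^W_{t-1}(z,y)\,d\mu(z)=p^W_{t-1+c}(x,y)$, and a Gaussian comparison valid for $t\ge 1$ yields $|\nabla_x p^W_t(x,y)|\le C\,p^W_{c't}(x,y)$. Merging the two regimes and transferring back through $\bar p_t=p^W_t/(HH)$, with $p_t\simeq p^W_t\simeq\bar p_t$ used to unify the Gaussian profiles, completes the argument.

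The main technical obstacle is the careful bookkeeping of Gaussian constants: the Duhamel step produces kernels $p_{c(t-s)}$ and $p_{cs}$ that must be aligned under a common constant before Chapman--Kolmogorov can be invoked, and each passage between $p_t$, $p^W_t$ and $\bar p_t$ only costs multiplicative constants at the price of enlarging the Gaussian denominator $c$. Once this is handled, interchange of $\nabla_x$ and the space--time integral in Duhamel is standard, since $p_t$ is smooth in $x$ with Gaussian-dominated derivative via (G) and the time singularity $(t-s)^{-1/2}$ is integrable.
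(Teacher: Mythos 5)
Your proposal is correct, and after the common first step it takes a genuinely different route from the paper. Both arguments begin identically, with the Doob transform $p^W_t(x,y)=H(x)H(y)\,\bar p_t(x,y)$ conjugating $\bar\Delta$ to the Schr\"odinger operator $\Delta-W$, and both use $|\nabla H/H|\le C$ and the pinching of $H$ to reduce the claim to a gradient bound for $p^W_t$. At that point the paper invokes Li--Yau's parabolic gradient estimate for Schr\"odinger operators (\cite[Theorem 3.1]{L-Y86}) to get $|\nabla_x p^W_t|\lesssim\sqrt{|\partial_t p^W_t|\,p^W_t}$, and then feeds in Grigor'yan's time-derivative estimates for $\bar p_t$; this is why hypothesis (A) asks for control of $|\nabla W|$ and $|\Delta W|$ as well as $|W|$. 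You instead run a single Duhamel iteration around $e^{t\Delta}$ for $0<t\le 1$, using only the unperturbed Li--Yau gradient bound (G) (available from non-negative Ricci), the boundedness of $W$, the comparability $p^W_s\simeq\bar p_s\simeq p_s$, and Chapman--Kolmogorov, followed by a semigroup splitting at time $1$ for the large-time regime. Your route is more elementary in the sense that it uses only $\|W\|_\infty<\infty$ from (A) and never touches derivatives of $W$, so it actually proves the proposition under a weaker portion of the hypotheses; the cost is exactly the Gaussian-constant bookkeeping you flag, which is routine given the two-sided Li--Yau bounds, the doubling property, and the comparison $\bar p_{\alpha t}\le C\bar p_{\beta t}$ recorded in the paper. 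The paper's route is shorter on the page but leans on a heavier black box.
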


\begin{proof}
By Doob transform, we have
$$ \bar{p}_t(x, y) = \frac{1}{H(x) H(y)} p^{W}_t (x, y)  $$
where $p^{W}_t (x, y)$ is the integral kernel of $e^{t\Delta_W}$ defined on $L^2(M, d\mu)$ and $\Delta_W = \Delta - W $ with $W(x)=\frac{\Delta H(x)}{H(x)}$. Therefore under the assumption (A), we obtain by \cite[Theorem 3.1]{L-Y86},
\begin{align*}
 \left| \nabla_x \bar{p}_t (x, y) \right|  & = \left| -\frac{\nabla_x H(x) }{H(x)} \bar{p}_t(x, y) + \frac{\nabla_x  p^W_t(x, y) }{H(x)H(y)} \right|  \\
   & \le   \frac{C}{1 \wedge \sqrt{t}} \bar{p}_t (x, y) + H^{-1}(x)H^{-1}(y) \sqrt{|\partial_t p^W_t(x, y)| p^W_t(x, y)} \\
   & \le \frac{C'}{1 \wedge \sqrt{t}} \bar{p}_{ct} (x, y).
\end{align*}
In the last inequality, we have used the equality $\partial_t p^W_t(x, y)= H(x) H(y) \partial_t  \bar{p}_t(x, y) $. The time derivative estimates for $\bar{p}_t(x, y)$ can be found in \cite{G}.
\end{proof}

\section{Proof the main results}
Now we are ready to prove our main results.
\begin{proof}[Proof of Theorem 1.1]
First we give the Dyson series for $\widetilde{p}_t(x, y)$.  Now we claim that
\begin{equation}\label{Dyson series}
  \widetilde{p}_t(x, y) = \sum_{k=0}^{\infty} \partial_t^k \beta_t^k(x, y), \hspace{0.5cm} \forall ~ t>0, x, y \in M.
\end{equation}
Set $ \beta_z^0(x, y)= \bar{p}_z(x, y)   $  and denote  inductively
$$  \beta_z^k(x, y)= \int_0^1 \int_M z\bar{p}_{(1-v)z} (x, w) \beta_{vz}^{k-1} (w, y) \phi(w) d\bar{\mu}(w) dv, $$
where $k\ge 1$, $\Re z>0$ , $\phi(w)=1-h(w)$ for all $ w\in M $.

By changing variable $s=v|z|$, we have
\begin{align*}
  |\beta_z^1(x,y)| & = \left| \int_0^{|z|} \int_M  e^{i\theta} \bar{p}_{(|z|-s)e^{i \theta}} (x, w) \bar{p}_{se^{i \theta}} (w, y)   \phi (w)  d\bar{\mu} ds \right| \\
   & \le \int_0^{|z|} \int_M |\bar{p}_{(|z|-s)e^{i \theta}} (x, w) \bar{p}_{se^{i \theta}} (w, y) | \| \phi \|_\infty d\bar{\mu} ds \\
  &  \le C \int_0^{|z|} \int_M \bar{p}_{\alpha (|z|-s)} (x, w) \bar{p}_{\alpha s} (w, y)  \| \phi \|_\infty d\bar{\mu} ds   \\
   & \le C |z| \| \phi \|_\infty \bar{p}_{\alpha |z|} (x,y).
\end{align*}
We have used \eqref{complex time estimate} and the semigroup properties in the last inequality.

By induction for $k\ge 2$, we have for all $  x, y \in M, \Re z>0, |\arg z|\le \frac{\pi}{4}$
\begin{align*}
  |\beta_z^k (x,y)| & \le  \int_0^{|z|} \int_M |\bar{p}_{(|z|-s)e^{i \theta}} (x, w) \beta^{k-1}_{se^{i \theta}} (w, y) | \| \phi \|_\infty d\bar{\mu} ds   \\
       & \le C^{k-1} \| \phi \|_{\infty}^k \frac{1}{(k-1)!} \int_0^{|z|} s^{k-1} ds \int_M \bar{p}_{\alpha(|z|-s)}(x,w) \bar{p}_{\alpha s}(w,y) d\bar{\mu} \\
       & \le C^k \| \phi \|_{\infty}^k \frac{|z|^k}{k! } \bar{p}_{\alpha |z|} (x,y).
\end{align*}
By the Cauchy's integral formula and the above estimates we can give the estimates for $\partial_t^k \beta_t^k(x, y)$. To be precise,   for any $t>0$, consider the circle $\Gamma$  centered at $t$ with radius $t\sin \frac{\pi}{4}$. Thus  we obtain by Cauchy's integral formula
$$ |\partial_t^k\beta_t^k(x, y)|  \le \frac{k!}{(t\sin \frac{\pi}{4})^k} \max_{z\in \Gamma} |\beta_z^k(x, y)| \le 2^{\frac{k}{2}} C^k \|\phi  \|_{\infty}^k \max_{z\in \Gamma}  \left| \frac{z}{t} \right|^k \bar{p}_{\alpha|z|} (x, y).  $$
Since $|z-t|=t\sin \frac{\pi}{4}$, it follows
$$  |\partial_t^k\beta_t^k(x, y)| \le 2^{2+\frac{k}{2}} C^k \|\phi  \|_{\infty}^k \max_{z\in \Gamma}   \bar{p}_{\alpha|z|} (x, y) \le C_1 (\sqrt{2}C \|\phi \|_\infty)^k \bar{p}_{\alpha' t} (x, y), $$
where we have used \eqref{weighted heat kernel real time} in the last inequality. As a result, the right hand side of  \eqref{Dyson series} converges for all $x, y \in M$. Moreover, it indicates
\begin{equation}\label{series estimate}
  |\sum_{k=0}^\infty \partial_t^k \beta_t^k(x, y)| \le \frac{C_1}{1-\sqrt{2}C \|\phi \|_\infty}  \bar{p}_{\alpha' t} (x, y).
\end{equation}
We are only left to show the claim \eqref{Dyson series} holds. Indeed, by \eqref{series estimate} the right hand side of \eqref{Dyson series} converges.  Then we have
$$ \sum_{k=0}^\infty \widetilde{\Delta} \partial_t^k \beta_t^k =  \sum_{k=0}^\infty \partial_t^k [ \partial_t\beta_t^k -\phi \beta_t^{k-1}  ] = (1-\phi)\sum_{k=0}^\infty  \partial_t^{k+1} \beta_t^k. $$
Thus the right hand side of \eqref{Dyson series} satisfies the equation $\partial_t u =  \widetilde{\Delta} u $. Moreover by \eqref{weighted heat kernel real time} and \eqref{series estimate},  we have
$$ \int_M  \left|\sum_{k=0}^\infty \partial_t^k \beta_t^k(x, y) \right|  h(x) d\bar{\mu} < \infty,  $$
and
$$ \left|\sum_{k=0}^\infty \partial_t^k \beta_t^k(x, y) \right| \rightarrow 0 \hspace{0.5cm} \text{as} \hspace{0.5cm} t\rightarrow 0,$$
for $d(x, y)\ge \epsilon >0$. Thus the right hand side of \eqref{Dyson series} tends to the Delta function as $t\rightarrow 0$.
Then we have proved \eqref{Dyson series} and hence finished the proof.
\end{proof}
\begin{proof}[Proof of Corollary 1.3]
Under the assumption of Theorem 1.1, we have $h(x) \simeq 1$. Then by the fact $ \overline{V}(x, r) \simeq V(x, r) \simeq r^n $ and \eqref{weighted heat kernel real time}, we conclude
$$ C_1 t^{-\frac{n}{2}} \exp \left(- \frac{d^2(x, y)}{c_1t}  \right) \le \overline{p}_t(x, y) \le C_2 t^{-\frac{n}{2}} \exp \left(- \frac{d^2(x, y)}{c_2t}  \right).  $$
Thus according to \cite[Theorem 2.5]{GHL}, the result follows.
\end{proof}
\begin{proof}[Proof of Theorem 1.4]
By \eqref{Dyson series}, it is sufficient to consider $\nabla_x \partial_t^k \beta_t^k$. By the above argument, we have
$$ |\nabla_x \partial_t^k \beta_t^k (x, y) | = |\partial_t^k \nabla_x \beta_t^k (x, y)| \le \frac{k!}{(t\sin \frac{\pi}{4})^k } \max_{z\in \Gamma} | \nabla_x \beta_z^k (x, y)|.  $$
Note that by Proposition 2.2, we have
\begin{align*}
 |\nabla_x \beta_z^k(x, y) |  & \le \int_0^{|z|} \int_M |\nabla_x \bar{p}_{(|z|-s)e^{i \theta}} (x, w)|| \beta^{k-1}_{se^{i \theta}} (w, y)| \| \phi \|_\infty d\bar{\mu} ds \\
     & \le  \frac{C^{k-1} \| \phi \|_{\infty}^k}{(k-1)!} \int_0^{|z|} \frac{s^{k-1}}{1\wedge \sqrt{|z|-s} }  ds \int_M \bar{p}_{\alpha_1(|z|-s)}(x,w) \bar{p}_{\alpha_2 s}(w,y) d\bar{\mu} \\
     & \le C' \frac{C^{k-1} \| \phi \|_{\infty}^k}{(k-1)!} \frac{|z|^k}{1\wedge \sqrt{|z|}} \bar{p}_{\alpha s}(x,y),
\end{align*}
where $\alpha=\max\{ \alpha_1, \alpha_2 \}$. As a result, we have
$$ |\nabla_x \widetilde{p}_t(x, y) | \le \sum_{k\ge 0} |\partial_t^k \nabla_x \beta_t^k(x, y)| \le \sum_{k\ge 0} C_1 (\sqrt{2}C\| \phi \|_\infty)^k \frac{1}{1\wedge \sqrt{t}} \bar{p}_{\gamma t} (x, y). $$
Since $\widetilde{\nabla}= h^{-1}(x) \nabla$, we have proved the desired results.
\end{proof}
%\section*{Acknowledgment}
%The author is supported by the National Natural Science Foundation of China under Grant No.11901407 No.11971327 and the Fundamental Research Funds for the Central Universities No.2021SCU12105.

\bibliographystyle{amsplain}

\end{document}